\numberwithin{equation}{section}
\newtheorem{theorem}{Theorem}[section]
\newtheorem{corollary}[theorem]{Corollary}
\newtheorem{lemma}[theorem]{Lemma}
\newtheorem{remark}[theorem]{Remark}
\newcommand{\R}{{\mathbb R}}
\newcommand{\Rn}{{\mathbb R}^n}
\newcommand{\vro}{\varrho}
\newcommand{\vfi}{\varphi}
\def\d0{\delta_0}
\def\sc0{(x_0,y_0)}
\newcommand{\beq}{\begin{equation}}
\newcommand{\eeq}{\end{equation}}
\newcommand{\refe}[1]{{\rm (\ref{#1})}}
\newcommand{\dis}{\displaystyle}
\newcommand{\Mm}{\mathcal{M}^-_{\lambda, \Lambda}}
\newcommand{\Mp}{\mathcal{M}^+_{\lambda, \Lambda}}
\newcommand{\Ll}{\frac{\Lambda}{\lambda}}
\begin{document}
\parindent=0pt

\title[Homogeneous solutions of extremal Pucci's equations]{Homogeneous solutions of extremal Pucci's equations in  planar cones}

\author[ F. Leoni]
{ Fabiana Leoni}

\address{Dipartimento di Matematica\newline
\indent Sapienza Universit\`a  di Roma \newline
 \indent   P.le Aldo  Moro 2, I--00185 Roma, Italy.}
\email{leoni@mat.uniroma1.it}

\keywords{Fully nonlinear elliptic equations; Pucci's extremal operators; homogeneous solutions;  cones}
\subjclass[2010]{ 35J60, 35J25}
\begin{abstract}
We derive explicit expressions of the homogeneous solutions  in two dimensional cones  for Pucci's extremal equations. As  examples of possible applications, we obtain monotonicity formulas for all nonnegative supersolutions and necessary and sufficient  explicit conditions for non--existence results of Liouville type.
 \end{abstract}
\maketitle

\section{Introduction}\label{intro}
The goal of the present paper is to determine explicit solutions in two dimensional cones of homogeneous Dirichlet boundary value problems associated with the extremal elliptic equations
\begin{equation}\label{general}
{\mathcal M}^\pm_{\lambda, \Lambda}(D^2u)=0\, ,
\end{equation}
where $\Mp$ and $\Mm$ are the Pucci's extremal operators introduced by C. Pucci in \cite{P}.

We recall that, given two ellipticity constants $\Lambda\geq \lambda>0$, the operator $\Mp$, as a function of $M\in {\mathcal S}_n$ where ${\mathcal S}_n$ is the set of $n\times n$ symmetric matrices, can be equivalently defined either as
$$
\Mp (M) =\Lambda\, \sum_{\mu_i>0} \mu_i + \lambda\, \sum_{\mu_i<0} \mu_i
$$
where $\mu_1, \ldots ,\mu_n$ are the eigenvalues of $M$, or as
$$
\Mp (M) =\sup_{A\in {\mathcal A}_{\lambda, \Lambda}} {\rm tr} \left( A M\right)
$$
with ${\mathcal A}_{\lambda, \Lambda}=\{ A\in {\mathcal S}_n\, :\, \lambda\, I_n\leq A\leq \Lambda\, I_n\}$, where $I_n$ is the identity matrix in ${\mathcal S}_n$ and $\leq $ is the usual partial ordering in ${\mathcal S}_n$, meaning that $A\leq B$ if and only if $A-B$ has nonpositive eigenvalues.

The operator $\Mm$ is likewise defined either as
$$
\Mm (M) =\lambda\, \sum_{\mu_i>0} \mu_i + \Lambda\, \sum_{\mu_i<0} \mu_i
$$
or as
$$
\Mm (M) =\inf_{A\in {\mathcal A}_{\lambda, \Lambda}} {\rm tr} \left( A M\right)\, ,
$$
and it is easy to verify that
\begin{equation}\label{Mpm}
\Mp (M)= -\Mm(-M)\quad \hbox{for all } M\in {\mathcal S}_n\, .
\end{equation}
The Pucci operators ${\mathcal M}^\pm_{\lambda, \Lambda}$ are extremal not only in the class of linear elliptic operators, but they satisfy
$$
\Mm (M) \leq F(x,M)\leq \Mp (M) \quad \hbox{for all } M\in {\mathcal S}_n
$$
for any uniformly elliptic opearator $F$ having ellipticity constants $\Lambda$ and $\lambda$ and satisfying $F(x,O)=0$. This explains the central role played by ${\mathcal M}^\pm_{\lambda, \Lambda}$ in the elliptic theory, see the monograph \cite{CC}, and the importance of finding explicit solutions of \refe{general} which act as barrier functions in comparison theorems for solutions of more general fully nonlinear equations.

In particular, solutions of \refe{general} in cone--like domains may be used in the asymptotic analysis near corners of solutions in Lipschitz domains of general elliptic equations with asymptotically negligible lower order terms, as performed for divergence form operators, see e.g. \cite{D}.

Equations \refe{general} are known to have homogeneous solutions in cone--like domains of $\R^n$ for any dimension $n\geq 2$, see \cite{ASS, M, O}. In particular, the more recent results of \cite{ASS} establish that, for any positively homogeneous and uniformly elliptic operator $F$ and any cone ${\mathcal C}\subset \R^n$, the homogeneous Dirichlet problem
\begin{equation}\label{PF}
\left\{
\begin{array}{cl}
F(D^2u)=0 &  \hbox{ in } {\mathcal C}\\
u=0 &  \hbox{ on } \partial {\mathcal C}\setminus \{0\}
\end{array}
\right.
\end{equation}
has exactly four (up to normalization) constant sign homogeneous solutions, that are of the form
$$
 \begin{array}{ll}
 u_{\alpha^{\pm}}(x)= |x|^{\alpha^\pm} \phi_{\alpha^\pm}\left( \frac{x}{|x|}\right) & \hbox{with } \alpha^-<0<\alpha^+,\ \phi_{\alpha^\pm}>0\\[2ex]
  v_{\beta^{\pm}}(x)= |x|^{\beta^\pm} \psi_{\beta^\pm}\left( \frac{x}{|x|}\right) & \hbox{with } \beta^-<0<\beta^+,\ \psi_{\beta^\pm}<0
  \end{array}
  $$
 and  $u_{\alpha^{\pm}},\ v_{\beta^\pm}$ are the only constant sign solutions of \refe{PF} which are bounded either for $|x|\to 0$ or as $|x|\to +\infty$.
The homogeneity exponents $\alpha^\pm$ and $\beta^\pm$ are defined in \cite{ASS} by means of inf or sup formulas which resemble the maximum principle based approach to the theory of the principal eigenvalues, and  their evaluation requires the determination of explicit sub or supersolutions of problem \refe{PF}. When the operator $F$ is one of the Pucci extremal operators and the cone ${\mathcal C}$ is axially symmetric, a direct approach for finding the homogeneous solutions of \refe{PF} has been proposed in \cite{M}, where the one variable functions $\phi_{\alpha^\pm}$ and $\psi_{\beta^\pm}$ are proved to solve  fully nonlinear eigenvalue problems for ODE. The existence of  the pairs $(\alpha^\pm,\phi_{\alpha^\pm})$ and 
$(\beta^\pm,\psi_{\beta^\pm})$ is obtained by means of well-posedness  results for ODEs, and also in this case no explicit evaluation is provided. In the two dimensional case $n=2$, the autonomous second order ODE problems for  $(\alpha^\pm,\phi_{\alpha^\pm})$ and 
$(\beta^\pm,\psi_{\beta^\pm})$ have been reduced in \cite{O} to first order systems, which have been integrated by introducing polar coohrdinates in the phase plane. This method leads to implicit expressions both for the homogeneity exponents and for the angular functions.

In this paper we consider as in \cite{O} the two dimensional problem for the Pucci operators, and we integrate directly the fully nonlinear ODE obtained for the angular functions $\phi_\alpha^\pm$ and $\psi_\beta^\pm$. This leads to an explicit formula relating the homogeneity exponents $\alpha^\pm$ and $\beta^\pm$ with the angle measuring the amplitude of the cone and the ellipticity constants of the operators, see Theorem \ref{homS}.

Several consequences may be deduced from the explicit expression of the homogeneity exponents. As an example, it turns out that for the operator $\Mp$ positive homogeneous order two solutions  occur only in cones of amplitude $2\arctan \sqrt{\frac{\lambda}{\Lambda}}$, as well as negative
homogeneous order two solutions  occur only in cones of amplitude $2\arctan \sqrt{\frac{\Lambda}{\lambda}}$. 
This implies that, when looking for principal eigenfunctions of $\Mp$ as functions of separable variables in  planar domains analogous to rectangles for Laplace operators, one has to consider Lipschitz domains with corners of amplitude $2\arctan \sqrt{\frac{\lambda}{\Lambda}}$. This explicit construction has been performed in \cite{BL}.
Similarly, if a "second" eigenfunction for the operator $\Mp$ in the unit disk does exist, as discussed in \cite{BLP}, it must have a nodal line intersecting the boundary of the unit disk with a corner of amplitude $2\arctan \sqrt{\frac{\lambda}{\Lambda}}$.

The homogeneous solutions of equations \refe{general} in cone--like domains allow to prove extended comparison theorems of Phragmen--Lindel\"of type, H\"older regularity results and removable boundary singularity results for fully nonlinear elliptic equations, as proved in \cite{ASS, M}. Moreover, the homogeneity exponents $\alpha^\pm$ and $\beta^\pm$ are intimately related to the critical exponents which separate the zones of existence from those of non existence for the exponent $p\in \R$ of the problems
$$
u>0\, ,\quad {\mathcal M}^\pm (D^u)+u^p\leq 0\quad \hbox{ in  } {\mathcal C}\, .
$$
As discussed in \cite{AS,L}, if we consider for instance the operator $\Mm$ in the above differential inequality, then no positive solution $u$ exists provided that
\begin{equation}\label{palfa}
1-\frac{1}{\alpha^+}\leq p\leq 1-\frac{1}{\alpha^-}\, .
\end{equation}
By using the explicit expressions of the functions $\phi_{\alpha^\pm}$,  we can actually prove, at least in the case of two dimensional cones, that the above restrictions on $p$ are sharp, by exhibiting explicit solutions in the cases $p<1-\frac{1}{\alpha^+}$ and $p>1-\frac{1}{\alpha^-}$, see Theorem \ref{Liou}.

As further possible applications of homogeneous solutions, let us finally mention that they have been used for fully nonlinear equations also as counterexamples in regularity theory \cite{NTV}, or as barriers for studying overdetermined problems \cite{BD}.

The paper is organized as follows: in Section 2 we construct the homogeneous solutions and derive several relationships between the homogeneity exponents; in Section 3, after showing some simple monotonicity formulas for all positive supersolutions of uniformly elliptic equations, we prove the optimality of conditions \refe{palfa}



\section{Homogeneous solutions}

This section is devoted to determine explicitly   the  homogeneous solutions of equations \refe{general} in planar cones. Because of  identity \refe{Mpm}, it is enough to consider the operator $\Mm$ and to construct its positive and negative homogeneous solutions. Thus, we consider the homogeneous problem
\begin{equation}\label{homP}
\left\{ \begin{array}{c}
\Mm (D^2u) =0 \ \hbox{ in } \mathcal{C}_0\, ,\\[2ex]
u=0 \ \hbox{ on } \partial \mathcal{C}_0\setminus \{0\}\, ,
\end{array} \right.
\end{equation}
where $\mathcal{C}_0$ is the planar cone
$$
\mathcal{C}_0=\{ (x,y)\in \R^2\, :\, y> \cos (\theta_0) \sqrt{x^2+y^2} \}
$$
and $\theta_0\in (0, \pi)$ is the half-opening of $\mathcal{C}_0$.

We will make  use of the explicit computation of the eigenvalues of the hessian matrix for symmetric homogeneous functions which hold true in  any dimension $n\geq 2$. Let us start with the following algebraic  result, whose proof is easily verified by straightforward computation.

\begin{lemma}\label{eigenvalues}
Let $v,\ w\in \Rn$ be unitary vectors and, given $a,  b, c, d\in \R$, let us consider the symmetric matrix
$$
A = a\,  v\otimes v + b \, w\otimes w + c \, (v\otimes w +w\otimes v) +d\, I_n\, ,
$$
where $v\otimes w$ denotes the $n\times n$ matrix whose $i, j$-entry is $v_i w_j$. Then, the eigenvalues of $A$ are:
\begin{itemize}

\item $d$, with multiplicity (at least) $n-2$ and eigenspace given by $<v, w>^{\bot}$;
\smallskip

\item $\displaystyle  d+\frac{a+b+2cv\cdot w\pm \sqrt{(a+b+2cv\cdot w)^2+4(1-(v\cdot w)^2)(c^2-ab)}}{2}$, 
\newline which are simple (if different from $d$).
\end{itemize}
In particular, if either $c^2=ab$ or $(v\cdot w)^2=1$, then the eigenvalues are $d$, which has multiplicity $n-1$,  and $d+a+b+2cv\cdot w$,
  which is simple.
\end{lemma}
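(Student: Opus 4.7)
The plan is to reduce the problem to a two-dimensional linear-algebra computation by exploiting the structure of $A-dI$. Each rank-one term $v\otimes v$, $w\otimes w$, $v\otimes w$, $w\otimes v$ sends $\R^{n}$ into the subspace $V=\langle v,w\rangle$ and annihilates $V^{\perp}$. Consequently $A-dI$ vanishes on $V^{\perp}$ and preserves $V$, so $A$ acts as the scalar $d$ on $V^{\perp}$. This yields the first bullet: $d$ is an eigenvalue with multiplicity at least $n-2$ and eigenspace containing $\langle v,w\rangle^{\perp}$.

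The remaining two eigenvalues of $A$ are $d$ plus the two eigenvalues of the restriction $B:=(A-dI)|_{V}$, which I would identify by computing the trace and determinant of $B$. For the trace, observe that $\operatorname{tr}(B)$ equals the full $n$-dimensional trace of $A-dI$ (since the latter vanishes on $V^{\perp}$), and a direct computation gives
\[
\operatorname{tr}(B)=a\|v\|^{2}+b\|w\|^{2}+2c\,v\cdot w=a+b+2c\,v\cdot w.
\]
For the determinant, assuming $(v\cdot w)^{2}<1$, I would choose the orthonormal basis $\{e_{1},e_{2}\}$ of $V$ with $e_{1}=v$ and $e_{2}$ picked so that $w=(v\cdot w)\,e_{1}+\sqrt{1-(v\cdot w)^{2}}\,e_{2}$, write the $2\times 2$ matrix of $B$ in this basis, and expand. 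After the cancellations I expect the result to collapse to
\[
\det B=\bigl(1-(v\cdot w)^{2}\bigr)(ab-c^{2}).
\]
The two eigenvalues of $B$ then come from the quadratic formula applied to $\lambda^{2}-(a+b+2c\,v\cdot w)\lambda+(1-(v\cdot w)^{2})(ab-c^{2})=0$, which, after rewriting the discriminant as $(a+b+2c\,v\cdot w)^{2}+4(1-(v\cdot w)^{2})(c^{2}-ab)$, reproduces exactly the expression in the second bullet.

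The degenerate cases are immediate. If $(v\cdot w)^{2}=1$ then $w=\pm v$ and
\[
A-dI=(a+b\pm 2c)\,v\otimes v=(a+b+2c\,v\cdot w)\,v\otimes v,
\]
which is rank one with the stated nonzero eigenvalue on $v$. If instead $c^{2}=ab$ then $\det B=0$, so one eigenvalue of $B$ vanishes while the other must equal $\operatorname{tr}(B)=a+b+2c\,v\cdot w$. In either case $d$ has multiplicity $n-1$ and the remaining eigenvalue is simple. Since every step is a direct verification, I do not anticipate a real obstacle; the only mild point of care is choosing the basis of $V$ so that the determinant computation is short and transparent.
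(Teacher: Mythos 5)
Your proof is correct, and since the paper simply states that the lemma ``is easily verified by straightforward computation'' without giving details, your restriction-to-$\langle v,w\rangle$ argument via trace and determinant is precisely the kind of computation being invoked. The determinant calculation does collapse as you expect: with $s=v\cdot w$ and $t=\sqrt{1-s^2}$ the $2\times 2$ matrix of $B$ in your basis is $\begin{pmatrix} a+bs^2+2cs & t(bs+c)\\ t(bs+c) & bt^2\end{pmatrix}$, giving $\det B=t^2(ab-c^2)$, so every step checks out.
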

\smallskip

In the sequel we will use the notation, for $x\in \R^n$, $x=(x', x_n)$ with $x'\in \R^{n-1}$ and $x_n\in \R$.

\begin{lemma}\label{hessian}
Let $\Phi :\Rn \setminus \{ (0,x_n)\, :\, x_n\leq 0\} \to \R$ be a $C^2$  symmetric homogeneous function of the form
$$
\Phi (x) = \vro ^\alpha \phi (\theta)\, ,
$$
with $\vro =|x|\,,\ \theta =\arccos \left( \frac{x_n}{|x|}\right)\, ,\ \alpha \in \R$, for $\phi :[0,\pi )\to \R$ of class $C^2$ and satisfying $\phi'(0)=0$.
Then, the eigenvalues of the hessian matrix $D^2\Phi (x)$ are
\begin{itemize}

\item $\dis \vro^{\alpha-2} \left( \alpha\,  \phi +\frac{ \phi'}{\tan \theta} \right)$, with multiplicity  $n-2$ ; 
\smallskip

\item $\displaystyle  \frac{\vro^{\alpha-2}}{2} \left( \alpha^2 \phi +\phi'' \pm \sqrt{ \left[ \alpha (\alpha -2) \phi -\phi''\right]^2 +4 (\alpha -1)^2 \phi'^2} \right)$,  which are simple.
\end{itemize}
\end{lemma}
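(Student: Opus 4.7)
The plan is to differentiate $\Phi$ twice using the chain rule in the ``spherical'' frame adapted to the axial symmetry of $\Phi$ around the $e_n$-axis, show that the resulting matrix $D^2\Phi$ has exactly the algebraic shape required by Lemma \ref{eigenvalues}, and then read off the eigenvalues from that lemma.

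Set $\hat r = x/\rho$ (unit radial vector) and $\hat\theta = (\cos\theta\,\hat r - e_n)/\sin\theta$ (unit tangent in the direction of increasing $\theta$, lying in the plane spanned by $x$ and $e_n$); a direct check gives $|\hat\theta|=1$ and $\hat r\cdot\hat\theta = 0$. From $\rho = |x|$ and $\cos\theta = x_n/\rho$ one computes $\partial_i\rho = \hat r_i$ and $\partial_i\theta = \hat\theta_i/\rho$, so the gradient reads
\[
\partial_i\Phi = \Phi_\rho\,\hat r_i + \frac{\Phi_\theta}{\rho}\,\hat\theta_i.
\]
Differentiating once more requires the frame derivatives. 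The key identities are
\[
\partial_j\hat r_i = \frac{\delta_{ij} - \hat r_i\hat r_j}{\rho},\qquad \partial_j\hat\theta_i = \frac{1}{\rho}\bigl[\cot\theta\,(\delta_{ij} - \hat r_i\hat r_j - \hat\theta_i\hat\theta_j) - \hat r_i\hat\theta_j\bigr];
\]
the second follows from $\hat\theta_i = \cot\theta\,\hat r_i - \delta_{in}/\sin\theta$ together with $1/\sin^2\theta - \cot^2\theta = 1$. Substituting and collecting terms yields
\[
D^2\Phi = a\,\hat r\otimes\hat r + b\,\hat\theta\otimes\hat\theta + c\,(\hat r\otimes\hat\theta+\hat\theta\otimes\hat r) + d\,I_n,
\]
with $a = \Phi_{\rho\rho} - \Phi_\rho/\rho - (\cot\theta/\rho^2)\Phi_\theta$, $b = (\Phi_{\theta\theta} - \Phi_\theta\cot\theta)/\rho^2$, $c = \Phi_{\rho\theta}/\rho - \Phi_\theta/\rho^2$, and $d = \Phi_\rho/\rho + (\cot\theta/\rho^2)\Phi_\theta$. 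A convenient sanity check is that $a + b + nd$ reproduces the spherical expression of $\Delta\Phi$.

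Now Lemma \ref{eigenvalues} applied with $v = \hat r$, $w = \hat\theta$ (orthonormal, so $v\cdot w = 0$) yields directly the eigenvalue $d$ with multiplicity $n - 2$; specializing to $\Phi = \rho^\alpha\phi(\theta)$ this becomes $\rho^{\alpha-2}(\alpha\phi + \phi'/\tan\theta)$, the first item in the statement. For the two simple eigenvalues I would substitute $\Phi_\rho = \alpha\rho^{\alpha-1}\phi$, $\Phi_{\rho\rho} = \alpha(\alpha-1)\rho^{\alpha-2}\phi$, $\Phi_\theta = \rho^\alpha\phi'$, etc., into $d + (a+b)/2$ and into the discriminant $(a+b)^2 + 4(c^2-ab)$: the $\cot\theta$ contributions cancel and one gets $d + (a+b)/2 = \rho^{\alpha-2}(\alpha^2\phi + \phi'')/2$ together with $(a+b)^2 + 4(c^2-ab) = \rho^{2\alpha-4}\bigl\{[\alpha(\alpha-2)\phi - \phi'']^2 + 4(\alpha-1)^2(\phi')^2\bigr\}$, which is the second item.

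The main technical obstacle is the calculation of $\partial_j\hat\theta_i$: it is easy to miss the $-\cot\theta\,\hat r_i\hat r_j$ contribution, which would make the trace $a+b+nd$ disagree with $\Delta\Phi$. A secondary point is the algebraic cancellation behind the simplification of the discriminant, which rests on the identity $(A+B-2C)^2 - 4(A-C)(B-C) = (A-B)^2$ applied with $A = \alpha(\alpha-2)\phi$, $B = \phi''$, $C = \phi'\cot\theta$. Finally, the hypothesis $\phi'(0) = 0$ is used only to make the formula $\alpha\phi + \phi'/\tan\theta$ well defined as $\theta\to 0^+$, i.e.\ on the $e_n$-axis where $\hat\theta$ itself degenerates.
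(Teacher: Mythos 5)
Your proof is correct, and it takes a genuinely different route from the paper's, though both hinge on Lemma~\ref{eigenvalues}. The paper writes $\Phi(x)=\rho^\alpha\vfi(x_n/\rho)$ with $\vfi(t)=\phi(\arccos t)$ and differentiates directly in Cartesian coordinates, obtaining a decomposition of $D^2\Phi$ along the (non-orthogonal) pair $\{x/\rho,\ {\bf e}_n\}$, whose inner product is $t=\cos\theta$; it then invokes Lemma~\ref{eigenvalues} in its general form (with $v\cdot w\neq 0$) and only at the very end substitutes $\vfi(t)=\phi(\arccos t)$ to pass from $t$-derivatives of $\vfi$ to $\theta$-derivatives of $\phi$. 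You instead work in the orthonormal spherical frame $\{\hat r,\hat\theta\}$ adapted to the axial symmetry, which front-loads the effort into the frame derivative $\partial_j\hat\theta_i$ (your formula for it, including the $-\cot\theta\,\hat r_i\hat r_j$ piece, is right), and pays off because $\hat r\cdot\hat\theta=0$ lets you use the simplified version of Lemma~\ref{eigenvalues}: the discriminant reduces to $(a+b)^2+4(c^2-ab)$ without the $(v\cdot w)^2$ weight, and the algebraic identity $(A+B-2C)^2-4(A-C)(B-C)=(A-B)^2$ eliminates the $\cot\theta$ terms cleanly. The paper's route minimizes the differentiation but requires carrying the cross-term $2cv\cdot w$; yours minimizes the eigenvalue algebra but requires the more delicate frame calculus. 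Your sanity check via $\Delta\Phi$ and the observation on the role of $\phi'(0)=0$ are both sound. Both proofs are correct and of comparable length; neither is clearly preferable.
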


\begin{proof} Let $\vfi :(-1,1] \to \R$ be a function of class $C^2$ and let us consider functions $\Phi: \Rn\setminus \{(0,x_n)\, :\, x_n\leq 0\} \to \R$ of the form 
$$
\Phi (x)= \vro^\alpha \vfi \left( \frac{x_n}{\vro}\right)\, .
$$
Then, a direct computation shows that
$$
\begin{array}{rl}
\dis D^2\Phi (x) = & \dis \vro^{\alpha -2} \left\{ \left( \alpha\,  \vfi -t \vfi'\right) I_n + \left( \alpha (\alpha -2) \vfi +(3-2\alpha) t \vfi' +t^2\vfi''\right) \frac{x}{\vro}\otimes \frac{x}{\vro} \right.\\[2ex]
& \dis \quad \left. +  \vfi'' {\bf e}_n\otimes {\bf e}_n  + \left( (\alpha -1)\vfi' -t \vfi'' \right) \left( \frac{x}{\vro}\otimes {\bf e}_n +{\bf e}_n \otimes \frac{x}{\vro}\right) \right\}
\end{array}
$$
where $t=\frac{x_n}{\vro}$, and ${\bf e}_n =(0,\ldots, 0,1)$.

According to Lemma \ref{eigenvalues}, the eigenvalues of $D^2\Phi (x)$ are
\begin{itemize}

\item $\dis \vro^{\alpha-2} \left( \alpha\,  \vfi -t\vfi' \right)$, with multiplicity  $n-2$; 
\smallskip

\item $\displaystyle  \frac{\vro^{\alpha-2}}{2} \left( \alpha^2 \vfi -t \vfi' +(1-t^2) \vfi'' \pm \sqrt{ \mathcal D} \right)$,  
\end{itemize}
with ${\mathcal D}=\left ( \alpha (\alpha -2) \vfi +t \vfi' -(1-t^2) \vfi'' \right)^2 +4 (\alpha -1)^2 (1-t^2) \vfi'^2$. 

Now, if we choose $\vfi $ of the form
$$
\vfi (t)= \phi (\arccos t)\,,
$$
then we obtain
$$
-\sqrt{1-t^2} \vfi' =\phi'\,,\quad (1-t^2)\vfi'' -t\vfi' = \phi''\, ,
$$
hence the conclusion.
\end{proof}

\begin{remark}
{\rm  An analogous computation shows that for smooth symmetric functions of separable variables of the form
$$
\Phi (x) = \psi (\rho)\, \phi (\theta)\, ,
$$
if  $\psi :(0,+\infty )\to \R$ is of class  $C^2$ then  the eigenvalues of the hessian matrix $D^2 \Phi(x)$ are
\begin{itemize}

\item $\dis  \left( \frac{\psi'}{\rho} \phi +\frac{\psi}{\rho^2} \frac{\phi'}{\tan \theta} \right)$, with multiplicity  $n-2$; 
\smallskip

\item $\displaystyle  \frac{ \left( \psi'' +\frac{\psi'}{\rho} \right) \phi +\frac{\psi}{\rho^2} \phi'' \pm \sqrt{ \mathcal D}}{2}$,  
\end{itemize}
with 
$$
 {\mathcal D} =\left( \left( \psi''-\frac{\psi'}{\rho}\right) \phi -\frac{\psi}{\rho^2} \phi''\right)^2 +4 \left( \frac{\psi'}{\rho} -\frac{\psi}{\rho^2} \right)^2 \phi'^2 \, .
$$
\hfill$\Box$
}\end{remark}
\medskip

For what follows, it is convenient to introduce the following functions. Let $\omega \geq 1$ be a parameter and let $g_\omega :\, (-\infty ,\ 1-\omega )\cup (1-1/\omega , +\infty) \to (0,+\infty)$ be the positive function defined as
\begin{equation}\label{gom}
g_\omega (\alpha) = \left\{ \begin{array}{ll}
 \arctan \sqrt{\omega} +\frac{2-\alpha}{\sqrt{(\alpha -1+1/\omega)(\alpha-1+\omega)}} \arctan \sqrt{\frac{\omega (\alpha -1+1/\omega)}{\alpha-1+\omega} } &    \hbox{ if } \alpha\geq 1\\[2ex]
 \arctan \frac{1}{\sqrt{\omega}} +\frac{2-\alpha}{\sqrt{(\alpha -1+1/\omega)(\alpha-1+\omega)}} \arctan \sqrt{\frac{\alpha -1+\omega}{\omega (\alpha-1+1/\omega)}}  &    \hbox{ if } 1-\frac{1}{\omega}<\alpha<1\\[2ex]
 -\arctan \sqrt{\omega} +\frac{2-\alpha}{\sqrt{(\alpha -1+1/\omega)(\alpha-1+\omega)}} \arctan \sqrt{\frac{\omega (\alpha -1+1/\omega)}{\alpha-1+\omega}} &    \hbox{ if } \alpha < 1-\omega
\end{array}\right.
\end{equation}

For $\omega \geq 1$ and $\alpha \in (-\infty ,\ 1-\omega )\cup (1-1/\omega , +\infty)$, let further 
$G_{\omega, \alpha} : [0,1] \to [0, g_\omega(\alpha)]$ be defined as
\begin{equation}\label{Gom}
G_{\omega, \alpha} (x) = \left\{ \begin{array}{ll}
\arctan \left(\sqrt{\omega}x\right) +
\frac{2-\alpha}{\sqrt{(\alpha -1+1/\omega)(\alpha-1+\omega)}} \arctan \left( \sqrt{\frac{\omega (\alpha -1+1/\omega)}{\alpha-1+\omega}}x\right) &  \hbox{ for } \alpha\geq 1\\[2ex]
\arctan \left(\frac{x}{\sqrt{\omega}}\right) +
\frac{2-\alpha}{\sqrt{(\alpha -1+1/\omega)(\alpha-1+\omega)}} \arctan \left(\sqrt{\frac{\alpha -1+\omega}{\omega (\alpha-1+1/\omega)}}x\right) & \hbox{ for } 1-\frac{1}{\omega}<\alpha<1\\[2ex]
-\arctan\left(\sqrt{\omega}x\right) +
\frac{2-\alpha}{\sqrt{(\alpha -1+1/\omega)(\alpha-1+\omega)}} \arctan \left(\sqrt{\frac{\omega (\alpha -1+1/\omega)}{\alpha-1+\omega}}x\right) &  \hbox{ for } \alpha < 1-\omega
\end{array}\right.
\end{equation}
Note that 
$$
g_1(\alpha)= \frac{\pi}{2|\alpha|}\,,\quad G_{1,\alpha}(x)=\frac{2 \arctan x}{|\alpha|}\,,
$$
and, in general,
$$
G_{\omega,\alpha}(1)=g_\omega (\alpha)\,.
$$
The function $g_\omega$ is strictly increasing on $(-\infty, 1-\omega)$ and strictly decreasing on $(1-1/\omega ,+\infty)$, as well as $G_{\omega, \alpha}$ is strictly increasing on $[0,1]$. Moreover, $g_\omega$ strictly increases with respect to $\omega$, except for $\alpha=1$ where $g_\omega (1)=\frac{\pi}{2}$ for all $\omega\geq 1$; $G_{\omega, \alpha}$ increases with respect to $\alpha \in (-\infty, 1-\omega)$ and decreases with respect to $\alpha \in (1-1/\omega , +\infty)$,  and it always increases with respect to $\omega \geq 1$. The graphs of $g_{\omega}$ and $G_{\omega, \alpha}$ are represented in Fig. 1 and 2 respectively.

\begin{figure}
\includegraphics[height=60mm]{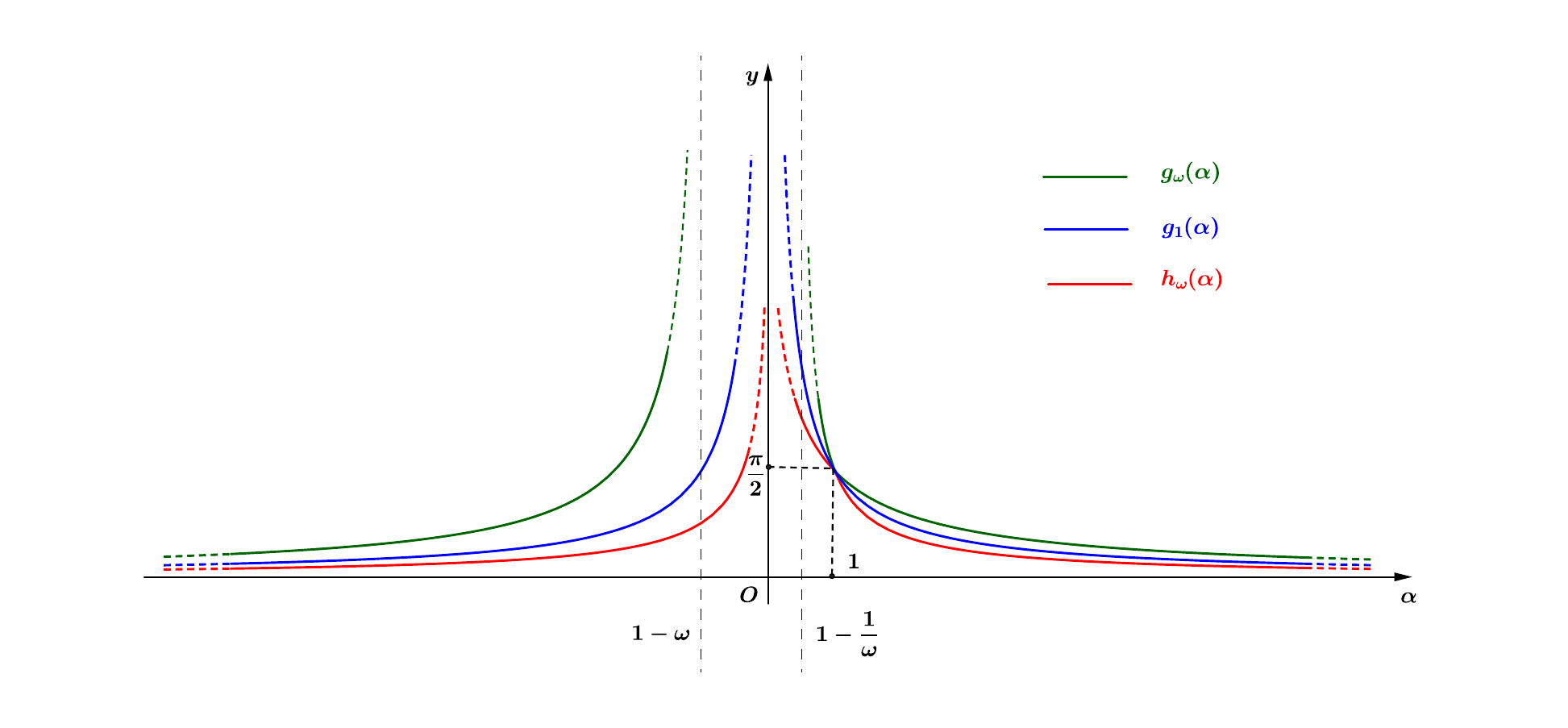}
\caption{The functions $g_\omega$ and $h_\omega$ for $\omega>1$ and $\omega=1$.}
\end{figure}

The functions $g_\omega$ and $G_{\omega, \alpha}$ will be used to determine respectively the homogeneity exponents and the angular part of the positive homogeneous solutions of problem \refe{homP}. In order to obtain the analogous quantities for the negative homogeneous solutions, let us further introduce the function $h_\omega :\, (-\infty ,0)\cup (0,+\infty) \to (0,+\infty)$  defined as
\begin{equation}\label{hom}
h_\omega (\alpha) = \left\{ \begin{array}{ll}
 \arctan \frac{1}{\sqrt{\omega}} +\frac{2-\alpha}{\sqrt{(\alpha -1+1/\omega)(\alpha-1+\omega)}} \arctan \sqrt{\frac{ \alpha -1+\omega}{\omega (\alpha-1+1/\omega)} } &    \hbox{ if } \alpha\geq 1\\[2ex]
 \arctan\sqrt{\omega} +\frac{2-\alpha}{\sqrt{(\alpha -1+1/\omega)(\alpha-1+\omega)}} \arctan \sqrt{\frac{\omega(\alpha -1+1/\omega)}{\alpha-1+\omega}}  &    \hbox{ if } 1-\frac{1}{\omega}< \alpha<1\\[2ex]
  \arctan\sqrt{\omega} +\frac{2-\alpha}{\sqrt{-(\alpha -1+1/\omega)(\alpha-1+\omega)}} {\rm arctanh} \sqrt{-\frac{\omega(\alpha -1+1/\omega)}{\alpha-1+\omega}}  &    \hbox{ if } 0<\alpha \leq 1-\frac{1}{\omega}\\[2ex]
 -\arctan \frac{1}{\sqrt{\omega}} +\frac{2-\alpha}{\sqrt{-(\alpha -1+1/\omega)(\alpha-1+\omega)}} {\rm arctanh} \sqrt{-\frac{\alpha -1+\omega}{\omega(\alpha-1+1/\omega)}} &    \hbox{ if }  1-\omega \leq \alpha <0\\[2ex]
 -\arctan \frac{1}{\sqrt{\omega}} +\frac{2-\alpha}{\sqrt{(\alpha -1+1/\omega)(\alpha-1+\omega)}} \arctan \sqrt{\frac{\alpha -1+\omega}{\omega(\alpha-1+1/\omega)}} &    \hbox{ if } \alpha < 1-\omega
\end{array}\right.
\end{equation}
and, accordingly, the function $H_{\omega, \alpha} : [0,1] \to [0,h_\omega(\alpha)]$  defined as
\begin{equation}\label{Hom}
H_{\omega, \alpha} (x) = \left\{ \begin{array}{ll}
\arctan \left( \frac{1}{\sqrt{\omega}}x\right) +\frac{2-\alpha}{\sqrt{(\alpha -1+1/\omega)(\alpha-1+\omega)}} \arctan \left(\sqrt{\frac{ \alpha -1+\omega}{\omega (\alpha-1+1/\omega)} }x\right) &    \hbox{ if } \alpha\geq 1\\[2ex]
 \arctan \left(\sqrt{\omega} x\right)+\frac{2-\alpha}{\sqrt{(\alpha -1+1/\omega)(\alpha-1+\omega)}} \arctan \left( \sqrt{\frac{\omega(\alpha -1+1/\omega)}{\alpha-1+\omega}} x\right) &    \hbox{ if } 1-\frac{1}{\omega}< \alpha<1\\[2ex]
  \arctan \left(\sqrt{\omega} x\right)+\frac{2-\alpha}{\sqrt{-(\alpha -1+1/\omega)(\alpha-1+\omega)}} {\rm arctanh} \left( \sqrt{-\frac{\omega(\alpha -1+1/\omega)}{\alpha-1+\omega}} x\right) &    \hbox{ if } 0<\alpha \leq 1-\frac{1}{\omega}\\[2ex]
 -\arctan \left( \frac{1}{\sqrt{\omega}} x\right)+\frac{2-\alpha}{\sqrt{-(\alpha -1+1/\omega)(\alpha-1+\omega)}} {\rm arctanh} \left( \sqrt{-\frac{\alpha -1+\omega}{\omega(\alpha-1+1/\omega)}}x\right) &    \hbox{ if }  1-\omega \leq \alpha <0\\[2ex]
 -\arctan \left( \frac{1}{\sqrt{\omega}}x\right) +\frac{2-\alpha}{\sqrt{(\alpha -1+1/\omega)(\alpha-1+\omega)}} \arctan \left( \sqrt{\frac{\alpha -1+\omega}{\omega(\alpha-1+1/\omega)}} x\right) &    \hbox{ if } \alpha < 1-\omega
 \end{array}\right.
\end{equation}

$h_\omega$ and $H_{\omega, \alpha}$ satisfy analogous properties as $g_\omega$ and $G_{\omega, \alpha}$ and also their graphs are pictured in Fig.1 and Fig. 2 respectively. Note that, for $\alpha \in (-\infty ,1-\omega)\cup (1-1/\omega ,+\infty)$, we formally have $h_\omega (\alpha)=g_{1/\omega}(\alpha)$ and $H_{\omega ,\alpha}(x)=G_{1/\omega,\alpha}(x)$ as well, but $h_\omega (\alpha)$ is defined also for $1-\omega \leq \alpha <0$ and $0<\alpha \leq 1-1/\omega$. Note also that, owing to the monotonicity with respect to $\omega$,  one has
$$
h_{\omega} (\alpha) < h_1(\alpha)= \frac{\pi}{2 |\alpha|} = g_1(\alpha) < g_\omega (\alpha)\quad \forall\, \alpha \neq 1\, ,\ \omega >1\, ,
$$
and, moreover,
\begin{equation}\label{1}
h_{\omega} (1) = \frac{\pi}{2} = g_\omega (1)\quad \forall\,  \omega \geq1\, .
\end{equation}

\begin{figure}
\includegraphics[height=60mm]{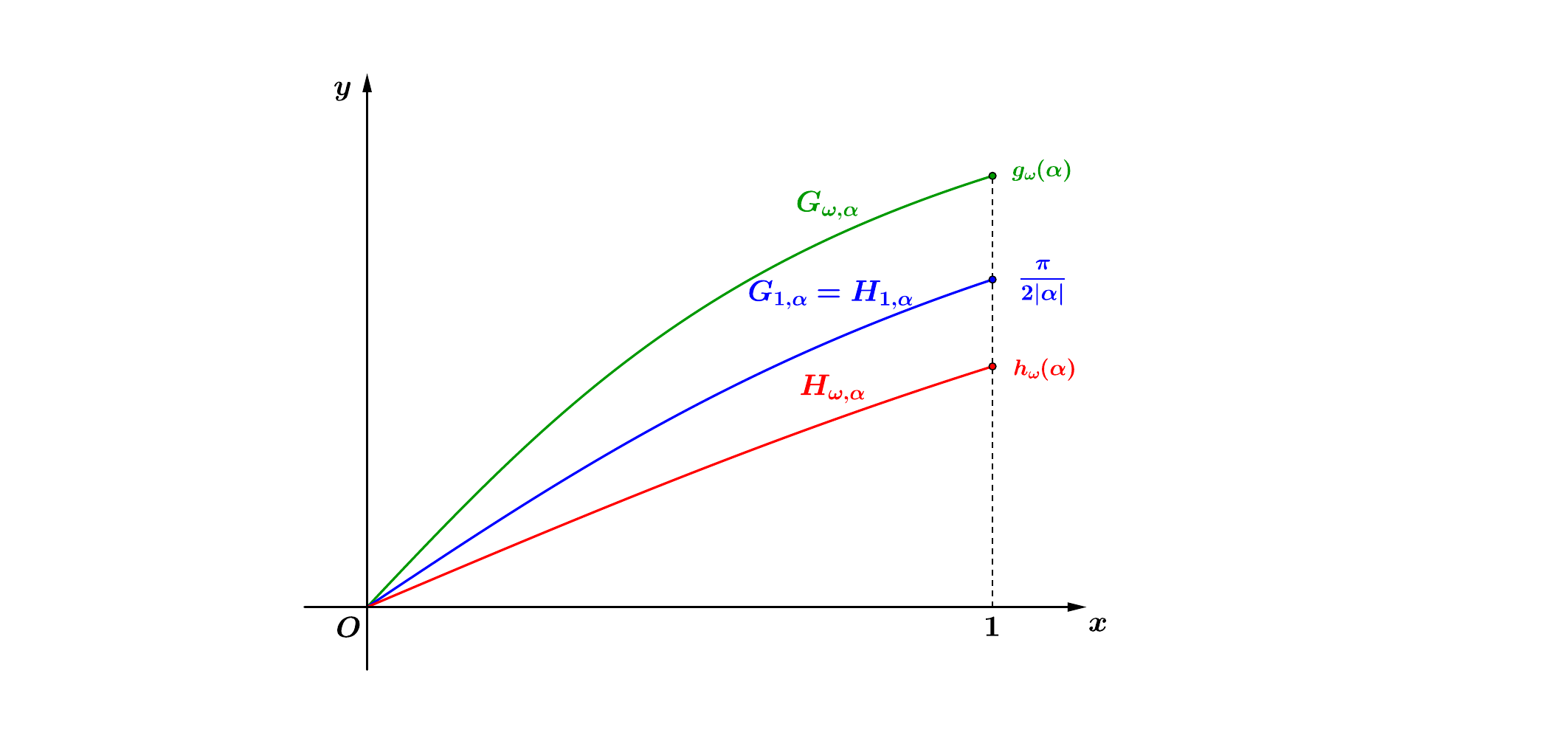}
\caption{The functions $G_{\omega, \alpha}$ and $H_{\omega, \alpha}$ for $\omega>1$ and $\omega=1$.}
\end{figure}

\begin{theorem}\label{homS}
Given $\Lambda \geq \lambda$, let $\omega=\frac{\Lambda}{\lambda}$ and let $g_\omega\, ,\ G_{\omega, \alpha}\, ,\ h_\omega$ and $H_{\omega, \alpha}$ be as in \refe{gom}, \refe{Gom}, \refe{hom} and \refe{Hom} respectively.  Then, with respect to polar coordinates $\rho=\sqrt{x^2+y^2},\ \theta= \arccos\left( \frac{y}{\rho}\right)$, the positive homogeneous solutions of problem \refe{homP} are 
$$u_{\alpha^\pm}=\rho^{\alpha^\pm} \phi_{\alpha^\pm}(\theta)$$
 where the exponents $\alpha^-<0<\alpha^+$ satisfy
\begin{equation}\label{apm}
g_\omega (\alpha^\pm)=\theta_0
\end{equation}
and the angular parts are given by
\begin{equation}\label{fi+}
\phi_{\alpha^+}(\theta)=\left\{
\begin{array}{l}
\frac{\left( 1-G_{\omega,\alpha^+}^{-1}(\theta)^2\right) \left( 1+\frac{\omega (\alpha^+-1+1/\omega)}{\alpha^+-1+\omega}G_{\omega,\alpha^+}^{-1}(\theta)^2\right)^{\frac{\alpha^+-2}{2}}}{\left( 1+\omega G_{\omega,\alpha^+}^{-1}(\theta)^2\right)^{\alpha^+/2}} \ \hbox{if } \theta_0\leq \pi/2\\[2ex]
\frac{\left( 1-G_{\omega,\alpha^+}^{-1}(\theta)^2\right) \left( 1+\frac{\alpha^+-1+\omega}{\omega(\alpha^+-1+1/\omega)}G_{\omega,\alpha^+}^{-1}(\theta)^2\right)^{\frac{\alpha^+-2}{2}}}{\left( 1+1/\omega G_{\omega,\alpha^+}^{-1}(\theta)^2\right)^{\alpha^+/2}} \ \hbox{if } \theta_0> \pi/2
\end{array}
\right.
\end{equation}
\begin{equation}\label{fi-}
\phi_{\alpha^-}(\theta)=\frac{\left( 1-G_{\omega,\alpha^-}^{-1}(\theta)^2\right) \left( 1+\frac{\omega (\alpha^--1+1/\omega)}{\alpha^--1+\omega}G_{\omega,\alpha^-}^{-1}(\theta)^2\right)^{\frac{\alpha^--2}{2}}}{\left( 1+\omega G_{\omega,\alpha^-}^{-1}(\theta)^2\right)^{\alpha^-/2}}
\end{equation}
Analogously, the negative homogeneous solutions of problem \refe{homP} are
$$v_{\beta^\pm}=-\rho^{\beta^\pm} \psi_{\beta^\pm}(\theta)$$
 where the exponents $\beta^-<0<\beta^+$ satisfy
\begin{equation}\label{bpm}
h_\omega (\beta^\pm)=\theta_0
\end{equation}
and the angular parts are given by
\begin{equation}\label{psi+}
\psi_{\beta^+}(\theta)=\left\{
\begin{array}{l}
\frac{\left( 1-H_{\omega,\beta^+}^{-1}(\theta)^2\right) \left( 1+\frac{\beta^+-1+\omega}{\omega (\beta^+-1+1/\omega)}H_{\omega,\beta^+}^{-1}(\theta)^2\right)^{\frac{\beta^+-2}{2}}}{\left( 1+1/\omega H_{\omega,\beta^+}^{-1}(\theta)^2\right)^{\beta^+/2}} \ \hbox{if } \theta_0\leq \pi/2\\[2ex]
\frac{\left( 1-H_{\omega,\beta^+}^{-1}(\theta)^2\right) \left( 1+\frac{\omega(\beta^+-1+1/\omega)}{\beta^+-1+1/\omega}H_{\omega,\beta^+}^{-1}(\theta)^2\right)^{\frac{\beta^+-2}{2}}}{\left( 1+\omega H_{\omega,\beta^+}^{-1}(\theta)^2\right)^{\beta^+/2}} \ \hbox{if } \theta_0> \pi/2
\end{array}
\right.
\end{equation}
\begin{equation}\label{psi-}
\psi_{\beta^-}(\theta)=\frac{\left( 1-H_{\omega,\beta^-}^{-1}(\theta)^2\right) \left( 1+\frac{ \beta^--1+\omega}{\omega (\beta^--1+1/\omega)}H_{\omega,\beta^-}^{-1}(\theta)^2\right)^{\frac{\beta^--2}{2}}}{\left( 1+1/\omega H_{\omega,\beta^-}^{-1}(\theta)^2\right)^{\beta^-/2}}
\end{equation}
\end{theorem}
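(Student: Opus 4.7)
By the axial symmetry of $\mathcal{C}_0$, of the boundary data, and of the operator $\Mm$, I look for positive homogeneous solutions of \eqref{homP} in the symmetric form $u(\rho,\theta) = \rho^\alpha \phi(\theta)$ with $\phi > 0$ on $[0,\theta_0)$, $\phi(\theta_0) = 0$ and $\phi'(0) = 0$. Applying Lemma \ref{hessian} with $n=2$ (so that the multiplicity-$(n-2)$ eigenvalue disappears), the Hessian $D^2u$ has only the two simple eigenvalues
$$
\mu_\pm = \frac{\rho^{\alpha-2}}{2}\left[\alpha^2\phi + \phi'' \pm \sqrt{[\alpha(\alpha-2)\phi - \phi'']^2 + 4(\alpha-1)^2(\phi')^2}\right].
$$
Since $\phi > 0$ forces $\mu_+ > 0 > \mu_-$, the equation $\Mm(D^2u) = 0$ reads $\lambda \mu_+ + \Lambda \mu_- = 0$, which, setting $\omega = \Lambda/\lambda$, rearranges to $(1+\omega)(\alpha^2\phi + \phi'') = (\omega-1)\sqrt{[\alpha(\alpha-2)\phi - \phi'']^2 + 4(\alpha-1)^2(\phi')^2}$. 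Squaring and using the factorisation $(1+\omega)^2 X^2 - (\omega-1)^2 Y^2 = [(1+\omega)X - (\omega-1)Y][(1+\omega)X + (\omega-1)Y]$ with $X = \alpha^2\phi + \phi''$ and $Y = \alpha(\alpha-2)\phi - \phi''$ collapses the ODE to the fundamental factored form
\begin{equation}\label{plan:key}
\bigl[\alpha(\alpha-1+\omega)\phi + \omega\phi''\bigr]\bigl[\alpha(\omega(\alpha-1)+1)\phi + \phi''\bigr] = (\omega-1)^2(\alpha-1)^2(\phi')^2.
\end{equation}

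The core step is to integrate \eqref{plan:key} explicitly. Being autonomous and quadratic in $\phi''$, the equation can be reduced to first order via $p(\phi) = \phi'$, $\phi'' = p\,dp/d\phi$; thanks to the identity $(\alpha-1+\omega) - \omega(\omega(\alpha-1)+1) = -(\alpha-1)(\omega-1)(\omega+1)$, the discriminant of the resulting quadratic in $dp/d\phi$ becomes a perfect square plus a positive multiple of $p^2$, leaving a separable equation for $p^2$ as a function of $\phi$. Its primitive is an $\arctan$ when $(\alpha-1+\omega)(\alpha-1+1/\omega) > 0$ and an $\mathrm{arctanh}$ when this product is negative, and this dichotomy is precisely the source of the case splitting in \eqref{gom} and \eqref{hom}. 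In practice, I would close the argument by a direct verification: the change of variable $\theta = G_{\omega,\alpha}(x)$ satisfies, for instance in the regime $\alpha \geq 1$,
$$
G'_{\omega,\alpha}(x) = \frac{\sqrt{\omega}\,(\omega+1)(1+x^2)}{(1+\omega x^2)\bigl[(\alpha-1+\omega) + (\omega(\alpha-1)+1)x^2\bigr]},
$$
as a short computation shows; differentiating the candidate \eqref{fi+} then expresses $\phi'(\theta)$ and $\phi''(\theta)$ rationally in $x$, and substituting into \eqref{plan:key} reduces it to an algebraic identity in $x$ with coefficients in $\alpha,\omega$, checkable term by term.

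The boundary condition $\phi(\theta_0) = 0$ is enforced by the factor $1 - x^2$ in the candidate formulas, so it becomes $\theta_0 = G_{\omega,\alpha}(1) = g_\omega(\alpha)$, i.e.\ \eqref{apm}. A monotonicity study of $g_\omega$ — strictly decreasing on $(1-1/\omega, +\infty)$, strictly increasing on $(-\infty, 1-\omega)$, with $g_\omega \to +\infty$ at the inner endpoints $1-1/\omega$ and $1-\omega$ (via the case-2 and case-3 branches respectively) and $g_\omega \to 0$ at $\pm\infty$ — then delivers a unique pair $\alpha^- < 0 < \alpha^+$ for every $\theta_0 \in (0,\pi)$. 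The two-fold split in \eqref{fi+} between $\theta_0 \leq \pi/2$ and $\theta_0 > \pi/2$ reflects the crossing of $\alpha^+$ through $\alpha = 1$ (at which $g_\omega(1) = \pi/2$), where the roles of the two factors $\alpha-1+\omega$ and $\omega(\alpha-1)+1$ in \eqref{plan:key} swap. For the negative homogeneous solutions $v = -\rho^\beta\psi$, identity \eqref{Mpm} gives $\Mm(D^2v) = -\Mp(D^2(\rho^\beta\psi))$, so the problem reduces to $\Mp(D^2w) = 0$ for $w = \rho^\beta\psi > 0$; this is formally obtained from the previous one by interchanging $\lambda$ and $\Lambda$, i.e.\ $\omega \leftrightarrow 1/\omega$, which is why $h_\omega$ coincides with $g_{1/\omega}$ on their common domain. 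The extra branches of $h_\omega$ on $[1-\omega, 0) \cup (0, 1-1/\omega]$ then record precisely the arctanh regime mentioned above, which is now relevant because the admissible set of $\beta$ for the $\Mp$ problem fills the whole line minus $\{0\}$ rather than avoiding $[1-\omega, 1-1/\omega]$.

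The main difficulty I expect lies in the second step, the explicit integration of \eqref{plan:key}: one must produce the right parametric ansatz and select the correct sign of the radical across the several regimes of $\alpha$ relative to $1-\omega$, $1-1/\omega$ and $1$ (and, for the $\psi$ equation, relative also to $0$), and across $\theta_0 = \pi/2$ for $\phi_{\alpha^+}$. Once the ansatz is in hand the ODE check is routine algebra, and matching the normalisations $\phi(0) = 1$, $\phi'(0) = 0$ with positivity on $(0,\theta_0)$ is forced; but arriving at the correct ansatz and stitching the branches together coherently is where the genuine work lies.
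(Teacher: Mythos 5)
Your proposal follows essentially the same path as the paper: reduce to a second-order ODE for $\phi$ via Lemma~\ref{hessian}, enforce the boundary condition through $g_\omega(\alpha)=\theta_0$ (resp.\ $h_\omega(\beta)=\theta_0$), handle the negative solutions through the duality \refe{Mpm}, and close by direct verification — computing $\phi'$, $\phi''$ (the paper differentiates $\phi$ with respect to $\theta$ directly, you propose the equivalent change of variable $\theta=G_{\omega,\alpha}(x)$), feeding them into Lemma~\ref{hessian}, and checking that $\lambda\mu_++\Lambda\mu_-=0$ with $\mu_+>0>\mu_-$. The only cosmetic difference is that you additionally display the squared, factored form of the ODE (your equation (3.1)/\emph{plan:key}), which corresponds to the paper's Remark on the ansatz $\phi'=\zeta(\phi)$ and does not change the substance of the verification.
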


\begin{proof} By the definitions \refe{apm} and \refe{bpm} of the homogeneity exponents it follows that 
$$
\left( G_{\omega ,\alpha^{\pm}}\right)^{-1} (\theta_0)=\left( H_{\omega ,\beta^{\pm}}\right)^{-1} (\theta_0)=1\, .
$$
Therefore, if $\phi_{\alpha^{\pm}}$ are defined as in  \refe{fi+}, \refe{fi-}, and $\psi_{\beta^\pm}$ are as in \refe{psi+}, \refe{psi-}, the boundary condition in problem \refe{homP} is satisfied both by $u_{\alpha^\pm}$ and $v_{\beta^\pm}$.

Let us now assume  $\theta_0 \leq \pi/2$. By \refe{1} and the monotonicity of $g_\omega$, we then have $\alpha^+\geq 1$. In order to check that $u_{\alpha^+}=\rho^{\alpha^+}\phi_{\alpha^+} (\theta)$ is a solution of \refe{homP}, we make use of  Lemma \ref{hessian}. By straightforward computation, we obtain
$$
\begin{array}{l}
\phi_{\alpha^+}' =- \frac{\alpha^+ (\omega+1)}{\sqrt{\omega}} \frac{ \left( 1+\frac{\omega (\alpha^+-1+1/\omega)}{\alpha^+-1+\omega}\left( G_{\omega,\alpha^+}^{-1}\right)^2\right)^{\frac{\alpha^+-2}{2}}}{\left( 1+\omega  \left(G_{\omega,\alpha^+}^{-1}\right)^2\right)^{\alpha^+/2}} G_{\omega,\alpha^+}^{-1}\\[2ex]
\phi_{\alpha^+}'' =- \frac{\alpha^+ (\alpha^+-1+\omega)}{\omega} \frac{ \left( 1+\frac{\omega (\alpha^+-1+1/\omega)}{\alpha^+-1+\omega}\left( G_{\omega,\alpha^+}^{-1}\right)^2\right)^{\frac{\alpha^+-2}{2}}}{\left( 1+\omega  \left(G_{\omega,\alpha^+}^{-1}\right)^2\right)^{\alpha^+/2}}  \left(
1- \frac{\omega^2 (\alpha^+-1+1/\omega)}{\alpha^+-1+\omega} \left( G_{\omega,\alpha^+}^{-1}\right)^2\right)
\end{array}
$$
According to Lemma \ref{hessian}, after some calculations, we find out that the eigenvalues of the  hessian matrix $D^2u_{\alpha^+}$ are 
$$
\lambda_1= \alpha^+ \omega \left( \alpha^+-1\right) \rho^{\alpha^+-2} \frac{ \left( 1+\frac{\omega (\alpha^+-1+1/\omega)}{\alpha^+-1+\omega}\left( G_{\omega,\alpha^+}^{-1}\right)^2\right)^{\frac{\alpha^+-2}{2}}}{\left( 1+\omega  \left(G_{\omega,\alpha^+}^{-1}\right)^2\right)^{\alpha^+/2}} \left( \frac{1}{\omega}+ \left( G_{\omega, \alpha^+}^{-1}\right)^2\right)>0
$$
and 
$$
\lambda_2 =-\frac{\lambda_1}{\omega}<0\, .
$$
Therefore, we obtain
$$
\Mm \left(D^2u_{\alpha^+}\right) = \lambda\, \lambda_1 +\Lambda\, \lambda_2= \lambda \left( \lambda_1+\omega \lambda_2\right)=0\, .
$$
The other cases can be carried out analogously, and the proof is completed.

\end{proof}
\smallskip

\begin{remark}
{\rm The ODE problem solved by the pairs $(\alpha^\pm , \phi_{\alpha^\pm})$ is the nonlinear eigenvalue problem
$$
\left\{ \begin{array}{c}
\dis \phi_\alpha'' +\alpha \left( \alpha +\frac{\gamma}{2} (\alpha -1)\right) \phi_\alpha = |\alpha -1|\, \sqrt{\gamma} \sqrt{\alpha^2 \left( 1+\frac{\gamma}{4}\right) \phi_\alpha^2 +\phi_\alpha'^2} \qquad \hbox{for } |\theta|<\theta_0\\[2ex]
\phi_\alpha (\pm \theta_0)=0
\end{array} \right.
$$
with $\gamma= \frac{(\omega-1)^2}{\omega}$, which is the nonlinear extension, when  $\omega>1$, of the linear case
$$
\left\{ \begin{array}{c}
\dis \phi_\alpha'' +\alpha^2  \phi_\alpha =0 \qquad \hbox{for } |\theta|<\theta_0\\[2ex]
\phi_\alpha (\pm \theta_0)=0
\end{array} \right.
$$
 obtained for $\omega=1$, i.e.  $\gamma=0$. The explicit expressions \refe{apm}, \refe{fi+} and \refe{fi-} have been obtained by looking for solutions $\phi_\alpha$ such that $\phi_\alpha'=\zeta (\phi_\alpha)$ for some smooth function $\zeta$. This ansatz leads to an easily integrable  first order ODE for the unknown function $\zeta$. 
 
 An analogous problem can be obtained when looking for homogeneous solutions in a symmetric cone of the $n$-dimensional euclidean space, with $n\geq 3$. But, in this case,  Lemma \ref{hessian} implies that one obtains a difficult to integrate non autonomous fully nonlinear ODE. 
}
\end{remark}
\begin{remark} 
{\rm  Let us remark that, for $\theta_0=\pi/2$, i.e. if $\mathcal{C}_0$ is the upper halfplane, then $\pm y$ are clearly the positive and negative homogeneous solutions with positive homogeneity exponent. Consistently, from the above theorem  and \refe{1}, we recover  in this case $\alpha^+=\beta^+=1$, by \refe{Gom} and \refe{Hom} we get $G_{\omega, 1} (x)=H_{\omega,1}(x)= \arctan \left( \frac{(\omega+1) x}{\sqrt{\omega}(1-x^2)}\right)$ and \refe{fi+} and \refe{psi+} yield $\phi_1(\theta)=\psi_1(\theta)= \cos \theta$.

 Other simple cases occur either if $\theta_0=\arctan \sqrt{\omega}$ or if $\theta_0 =\arctan  \frac{1}{\sqrt{\omega}}$. Indeed, according to Theorem \ref{homS}, if $\theta_0=\arctan \sqrt{\omega}$, then $\alpha^+=2$, and then $G_{\omega, 2}(x)=\arctan (\sqrt{\omega}x)$, $\phi_2(\theta)=\cos^2 (\theta) -\frac{1}{\omega} \sin^2 (\theta)$ and $u_2(x,y)=y^2-\frac{x^2}{\omega}$.
Symmetrically, if $\theta_0=\arctan \frac{1}{\sqrt{\omega}}$, one has $\beta^+=2$ and $v_2(x,y)=\omega x^2-y^2$. This observation inspired the explicit construction in \cite{BL} of the positive principal  eigenfunction for the operator $\Mp$ in special planar domains with corners of amplitude $2\arctan \frac{1}{\sqrt{\omega}}$.
}
\end{remark}

As a first consequence of Theorem \ref{homS} we can deduce some relationships between the homogeneity exponents $\alpha^\pm$ and $\beta^\pm$.

\begin{corollary}\label{rela}
 Given $\Lambda \geq \lambda$ and $\theta_0\in (0, \pi)$ let $\alpha^-(\theta_0)<0<\alpha^+(\theta_0)$ and $\beta^-(\theta_0)<0<\beta^+(\theta_0)$ be the homogeneity exponents of the respectively positive and negative  homogeneous solutions of problem \refe{homP}. Then, for any $\theta_0\in \left( 0, \frac{\pi}{2}\right)$ one has
$$
\begin{array}{c}
\alpha^+ \left( \theta_0+\frac{\pi}{2}\right) = \frac{\alpha^-(\theta_0)}{\alpha^-(\theta_0)-1}\\[2ex]
\beta^+ \left( \theta_0+\frac{\pi}{2}\right) = \frac{\beta^-(\theta_0)}{\beta^-(\theta_0)-1}\\[2ex]
\beta^+ \left( \frac{\pi}{2} -\theta_0\right) = \frac{\alpha^+(\theta_0)}{\alpha^+(\theta_0)-1}
\end{array}
$$
\end{corollary}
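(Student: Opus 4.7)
The three identities all express the same underlying involution $T(\alpha) = \alpha/(\alpha-1)$, so my plan is to reduce each identity to a functional equation for $g_\omega$ or $h_\omega$ under $T$, and then verify these functional equations by straightforward algebra.

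\textbf{Reduction to functional equations.} By the defining relations \refe{apm} and \refe{bpm}, together with the stated monotonicity of $g_\omega$ and $h_\omega$ on their relevant branches, proving the three identities is equivalent to proving, for a single generic $\alpha$:
\begin{enumerate}
\item $g_\omega(T(\alpha)) = g_\omega(\alpha) + \frac{\pi}{2}$ for $\alpha < 1-\omega$ (so $T(\alpha)\in(1-1/\omega,1)$);
\item $h_\omega(T(\alpha)) = h_\omega(\alpha) + \frac{\pi}{2}$ for $\alpha < 0$ (so $T(\alpha)\in(0,1)$);
\item $g_\omega(\alpha) + h_\omega(T(\alpha)) = \frac{\pi}{2}$ for $\alpha > 1$ (so $T(\alpha) > 1$).
\end{enumerate}
Once the right-hand sides are computed, the values $T(\alpha^-)$, $T(\beta^-)$, $T(\alpha^+)$ are identified, respectively, with $\alpha^+(\theta_0+\pi/2)$, $\beta^+(\theta_0+\pi/2)$ and $\beta^+(\pi/2-\theta_0)$ by monotonicity and uniqueness.

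\textbf{Key algebraic observation.} Set $A=\alpha-1+1/\omega$, $B=\alpha-1+\omega$, and let $\tilde\alpha = T(\alpha)$, with corresponding $\tilde A$, $\tilde B$. Using $\tilde\alpha-1 = 1/(\alpha-1)$, a direct computation gives
\[
\tilde A = \frac{B}{\omega(\alpha-1)},\qquad \tilde B = \frac{\omega A}{\alpha-1},
\]
from which one reads off the two identities that drive everything:
\[
\tilde A\,\tilde B = \frac{AB}{(\alpha-1)^2},\qquad \frac{\omega\tilde A}{\tilde B} = \frac{B}{\omega A},
\]
and likewise $\tilde B/(\omega\tilde A) = \omega A/B$. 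Consequently, the arctangent (or arctanh) arguments appearing in the second summand of $g_\omega(\tilde\alpha)$ or $h_\omega(\tilde\alpha)$ are reciprocals of those in $g_\omega(\alpha)$ or $h_\omega(\alpha)$.

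\textbf{Matching the two summands.} The coefficient in front of the inverse tangent transforms as
\[
\frac{2-\tilde\alpha}{\sqrt{\tilde A\,\tilde B}} \;=\; \frac{(\alpha-2)/(\alpha-1)}{\sqrt{AB}/|\alpha-1|} \;=\; \varepsilon\,\frac{2-\alpha}{\sqrt{AB}},
\]
where $\varepsilon = \mathrm{sgn}(1-\alpha)$. In the first two cases one has $\alpha < 0 < 1$, so $\varepsilon = +1$ and the arctan-terms in $g_\omega(\tilde\alpha)-g_\omega(\alpha)$ (resp.\ $h_\omega(\tilde\alpha)-h_\omega(\alpha)$) combine via $\arctan x+\arctan(1/x)=\pi/2$ for $x>0$, leaving only the constant terms $\arctan(1/\sqrt\omega)+\arctan\sqrt\omega = \pi/2$. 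In the third case $\alpha > 1$, so $\varepsilon = -1$ and the two arctan-terms in $g_\omega(\alpha)+h_\omega(\tilde\alpha)$ cancel, again leaving $\arctan\sqrt\omega+\arctan(1/\sqrt\omega)=\pi/2$.

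\textbf{Main obstacle.} The bookkeeping itself is the only real obstacle: $g_\omega$ and $h_\omega$ are defined piecewise over five regions, and one must check that $T$ maps each relevant region into the correct companion region (e.g.\ $T:(-\infty,1-\omega)\to(1-1/\omega,1)$ and $T:[1-\omega,0)\to(0,1-1/\omega]$ for the $h_\omega$ identity, where arctangent branches switch to arctanh). The signs of $A$, $B$, $AB$ and $\alpha-1$ must be tracked carefully in each case, but in every case the reduction collapses to the two elementary identities $\omega\tilde A/\tilde B = B/(\omega A)$ and $\arctan x+\arctan(1/x)=\pi/2$.
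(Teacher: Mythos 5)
Your reduction to the three functional identities for $g_\omega$ and $h_\omega$ under the involution $T(\alpha)=\alpha/(\alpha-1)$ is precisely the paper's proof, which simply records these identities and asserts that they follow ``by the right definition''; your explicit algebra ($\tilde A = B/(\omega(\alpha-1))$, $\tilde B = \omega A/(\alpha-1)$, and $\varepsilon=\mathrm{sgn}(1-\alpha)$ for the coefficient) is correct and makes that step transparent. One imprecision in the final paragraph is worth fixing: the branches of \refe{gom} and \refe{hom} that $T$ swaps are deliberately written with \emph{reciprocal} inverse-tangent arguments, so after substituting $\tilde A,\tilde B$ the non-constant summands of $g_\omega(T(\alpha))$ and $g_\omega(\alpha)$ (resp.\ $h_\omega$, resp.\ $g_\omega(\alpha)$ and $h_\omega(T(\alpha))$) in fact have \emph{identical} arguments and, via $\varepsilon$, equal or opposite coefficients, hence cancel exactly; the identity $\arctan x+\arctan(1/x)=\pi/2$ only enters through the leftover constant terms, $\arctan\sqrt\omega+\arctan(1/\sqrt\omega)=\pi/2$. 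As written, pairing the non-constant terms ``via $\arctan x+\arctan(1/x)=\pi/2$'' would not produce a quantity independent of $\alpha$, so this step should be phrased as an exact cancellation; the rest of the argument, and the conclusion, are correct.
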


\begin{proof} Let $\omega=\Ll$ and $g_\omega$, $h_\omega$ be as in \refe{gom} and \refe{hom}. Then, by the right definition, we obtain the following identities:
$$
\begin{array}{l}
g_\omega (\alpha)-g_\omega \left( \frac{\alpha}{\alpha-1}\right)=\frac{\pi}{2} \qquad \hbox{ for all }\ 1-\frac{1}{\omega} <\alpha <1\\[2ex]
h_\omega (\alpha)-h_\omega \left( \frac{\alpha}{\alpha-1}\right)=\frac{\pi}{2} \qquad  \hbox{ for all }\  0 <\alpha <1\\[2ex]
h_\omega (\alpha)+g_\omega \left( \frac{\alpha}{\alpha-1}\right)=\frac{\pi}{2} \qquad  \hbox{ for all }\  \alpha \geq 1\, .
\end{array}
$$
The conclusion then  follows from the characterizations  \refe{apm} and \refe{bpm} of $\alpha^\pm$ and $\beta^\pm$ given in Theorem \ref{homS}.

\end{proof}
\smallskip

\begin{remark}
{\rm In the paper \cite{L} an estimate for any dimension $n\geq 2$ of $\alpha^-\left( \frac{\pi}{2}\right)$was given, namely
\begin{equation}\label{leoni}
1-n\, \omega\leq \alpha^-\left( \frac{\pi}{2}\right)\leq -\omega (n-1)\, .
\end{equation}
We observe that,  by Theorem \ref{homS}, for $n=2$ inequalities \refe{leoni} amount to
$$
\left\{
\begin{array}{l}
\dis-\arctan \sqrt{\omega} +\frac{2 \omega+1}{\sqrt{2\omega^2-1}} \arctan\sqrt{\frac{2\omega^2-1}{\omega}}\leq \frac{\pi}{2}\\[2ex]
\dis -\arctan \sqrt{\omega}+ \frac{\sqrt{\omega}(2+\omega)}{\sqrt{\omega^2+\omega-1}}\arctan \sqrt{\omega^2+\omega-1} \geq \frac{\pi}{2}
\end{array}
\right.
$$
One can check that they indeed hold true and equalities occur only for $\omega=1$. 
However, the bounds \refe{leoni} for $n=2$ and Corollary \ref{rela} yield the global lower bound 
$$
\alpha^+(\theta) \geq \frac{\omega}{\omega+1}\qquad \hbox{ for all }\ \theta \in (0,\pi)\, .
$$
}
\end{remark}

\section{Some applications}

\subsection{Monotonicity formulas for supersolutions} 

As an immediate consequence of the comparison principle and the explicit knowledge of the homogeneous solutions $u_{\alpha^\pm}$, one can obtain bounds on nonnegative  supersolutions of  the extremal equations. In particular, we have the following monotonicity statements.
\begin{theorem}\label{monotonicity}
Let $u:\overline{\mathcal{C}_0}\to [0,+\infty]$  be a lower semicontinuous solution of  $\Mm (D^2u)\leq 0$ in $\mathcal{C}_0$, and, for $r>0$,
let us define
$$
m^\pm(r)\,:= \inf_{\partial B_r\cap \mathcal{C}_0} \frac{u}{\phi_{\alpha^\pm}}
$$
Then
\begin{equation}\label{monom+}
r\in (0,+\infty)\mapsto r^{-\alpha^+} m^+ (r)\ \hbox {is non increasing }
\end{equation}
\begin{equation}\label{monom-}
r\in (0,+\infty)\mapsto r^{-\alpha^-} m^- (r)\ \hbox{is non decreasing}\, .
\end{equation}
\end{theorem}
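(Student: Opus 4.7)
The plan is to run the standard viscosity comparison argument, taking as barriers suitable positive scalar multiples of the homogeneous solutions $u_{\alpha^\pm}(x) = |x|^{\alpha^\pm} \phi_{\alpha^\pm}(\theta)$ constructed in Theorem \ref{homS}. Since $\Mm(D^2 u_{\alpha^\pm}) = 0$ in the classical sense and $\Mm$ is positively $1$-homogeneous in the Hessian, every positive scalar multiple of $u_{\alpha^\pm}$ (and every additive constant shift thereof) is again a classical sub- and supersolution. The assumption $\Mm(D^2 u) \leq 0$ together with lower semicontinuity of $u$ then places the comparison cleanly within the scope of the standard theory in \cite{CC}.

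For \refe{monom+}, I would fix $0 < r_1 < r_2$, set $c := r_2^{-\alpha^+} m^+(r_2)$, and compare $u$ with $c\, u_{\alpha^+}$ in the bounded domain $\mathcal{C}_0 \cap B_{r_2}$. The ordering $c\, u_{\alpha^+} \leq u$ on the boundary is immediate: on $\partial B_{r_2} \cap \mathcal{C}_0$ it is the very definition of $m^+(r_2)$; on the lateral piece $\partial \mathcal{C}_0 \cap \overline{B_{r_2}} \setminus \{0\}$ the barrier vanishes; and at the apex the barrier still vanishes since $\alpha^+ > 0$, while $u \geq 0$ there by lower semicontinuity. Comparison then yields $u \geq c\, u_{\alpha^+}$ throughout $\mathcal{C}_0 \cap B_{r_2}$; restricting to $\partial B_{r_1} \cap \mathcal{C}_0$, dividing by $\phi_{\alpha^+} > 0$ and taking the infimum produces $m^+(r_1) \geq c\, r_1^{\alpha^+}$, which is precisely \refe{monom+}.

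For \refe{monom-}, the same strategy must be carried out in the exterior annular region $\mathcal{C}_0 \setminus \overline{B_{r_1}}$, with $c := r_1^{-\alpha^-} m^-(r_1)$ and the barrier $c\, u_{\alpha^-}$. The boundary ordering on $\partial B_{r_1} \cap \mathcal{C}_0$ and on the lateral boundary is as before, but the domain is now unbounded and no a priori control on $u$ at infinity is available: this is where the main obstacle lies. To avoid invoking a Phragm\'en--Lindel\"of principle directly, I would truncate to the bounded annulus $\mathcal{C}_0 \cap (B_R \setminus \overline{B_{r_1}})$ for $R > r_2$ and compare $u$ instead with the shifted barrier $c\, u_{\alpha^-} - \delta_R$, where $\delta_R := c\, R^{\alpha^-} \max_\theta \phi_{\alpha^-}(\theta)$. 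Since $\alpha^- < 0$ one has $c\, u_{\alpha^-} \leq \delta_R$ on $\partial B_R \cap \mathcal{C}_0$, so the boundary ordering $c\, u_{\alpha^-} - \delta_R \leq u$ now holds on the entire boundary of the truncated annulus; and subtracting a constant preserves the barrier being an $\Mm$-solution. Bounded-domain comparison gives $u \geq c\, u_{\alpha^-} - \delta_R$ inside, and letting $R \to \infty$ — so that $\delta_R \to 0$, again because $\alpha^- < 0$ — recovers $u \geq c\, u_{\alpha^-}$ on all of $\mathcal{C}_0 \setminus \overline{B_{r_1}}$. Evaluating on $\partial B_{r_2} \cap \mathcal{C}_0$ and dividing by $\phi_{\alpha^-}$ rearranges to \refe{monom-}.

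The substantive point of the argument is thus the truncation trick above, which trades a genuine Phragm\'en--Lindel\"of principle for a quantitative estimate of the decay of the barrier at infinity. What makes this replacement clean is precisely the sign of $\alpha^-$ guaranteed by Theorem \ref{homS}, together with the explicit form $|x|^{\alpha^-}$ of the radial part of the barrier, so that the correction $\delta_R$ can be computed and seen to vanish at infinity.
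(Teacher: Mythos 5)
Your proof is correct and follows essentially the same line as the paper: a direct comparison with $c\,u_{\alpha^+}$ in the bounded cone $\mathcal{C}_0\cap B_{r_2}$ for \refe{monom+}, and a comparison in a truncated annulus with a barrier shifted down by a small constant (vanishing as the outer radius grows, because $\alpha^-<0$) for \refe{monom-}. The paper implements the same truncation device by fixing $\epsilon>0$ with $u_{\alpha^-}<\epsilon$ outside $B_{R_\epsilon}$ and comparing with $\frac{m^-(r)}{r^{\alpha^-}}\bigl(u_{\alpha^-}-\epsilon\bigr)$ before sending $R\to\infty$ and $\epsilon\to 0$, which is the same idea up to bookkeeping.
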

\begin{proof}
Let $r>0$. The comparison principle in the domain $\mathcal{C}_0\cap B_r$ yields
$$
u\geq \left( \inf_{\partial B_r\cap \mathcal{C}_0}\frac{u}{u_{\alpha^+}}\right) \, u_{\alpha^+}=\frac{m^+(r)}{r^{\alpha^+}}u_{\alpha^+}\quad \hbox{in } \mathcal{C}_0\cap B_r\, ,
$$
hence 
$$
\rho^{-\alpha^+}m^+(\rho)\geq r^{-\alpha^+}m^+(r)\quad \hbox{ for all } \rho\leq r\, .
$$
In order to prove \refe{monom-}, let  $\epsilon >0$ be fixed. Then, there exists $R_\epsilon>0$ such that
$$
u_{\alpha^-}<\epsilon \quad \hbox{ in } \mathcal{C}_0\setminus B_{R_\epsilon}\, .
$$
For $0<r<R_\epsilon<R$, the comparison principle applied in the domain $\mathcal{C}_0\cap \left( B_R\setminus B_r\right)$ yields
$$
u+\epsilon\,  \left( \inf_{\partial B_r\cap \mathcal{C}_0}\frac{u}{u_{\alpha^-}}\right)\geq \left( \inf_{\partial B_r\cap \mathcal{C}_0}\frac{u}{u_{\alpha^-}}\right) \, u_{\alpha^-}= \frac{m^-(r)}{r^{\alpha^-}}u_{\alpha^-}\quad \hbox{in } \mathcal{C}_0\cap \left( B_R\setminus B_r\right)\, .
$$
By letting first $R\to +\infty$ and then $\epsilon\to 0$, we obtain
$$
u\geq \frac{m^-(r)}{r^{\alpha^-}}u_{\alpha^-}\quad \hbox{in } \mathcal{C}_0 \setminus B_r\, ,
$$
hence
$$
\rho^{-\alpha^-}m^-(\rho)\geq r^{-\alpha^-}m^-(r)\quad \hbox{ for all } \rho\geq r\, .
$$
\end{proof}

\subsection{Optimal non--existence Liouville type theorems} 
As it is well known for semilinear elliptic equations, it has been recently proved also for fully nonlinear elliptic equations that the orders of homogeneity 
 of the positive homogeneous solutions in cones determine the critical exponents in non existence Liouville type theorems for positive solutions of differential inequalities having power like zero order terms, see \cite{AS} and \cite{ L} for the case of halfspaces. In particular, it has been proved in \cite{AS} that the inequality
\begin{equation}\label{Pp0}
 \Mm (D^2u)+u^p\leq 0\ \hbox{in } \mathcal{C}_0\ 
\end{equation}
in any cone--like domain ${\mathcal C}_0\subset \R^n$ has no positive solution if
\begin{equation}\label{condp}
1-\frac{2}{\alpha^+}\leq p\leq 1-\frac{2}{\alpha^-}\, .
\end{equation}
We remark that the statement given in \cite{AS} actually is for Laplace operator, but the proof presented there, relying only on the maximum principle and on the existence of homogeneous solutions for the homogeneous equation,  can be transposed word by word to viscosity solutions of \refe{Pp0}.

We further observe that, if $n=2$, by Theorem \ref{homS} condition \refe{condp} may be written as
$
g_\omega \left( \frac{2}{1-p}\right) \leq \theta_0\, .
$
Moreover, thanks to the explicit expressions \refe{fi+} and \refe{fi-} of the functions $\phi_{\alpha^\pm}$, we can check that the above condition is optimal for the non existence of positive solutions of \refe{Pp0}, thus obtaining the following statement.

\begin{theorem}\label{Liou}
For $\Lambda\geq \lambda>0$, $\theta_0\in (0,\pi)$ and  $p\in \R$, the inequality \refe{Pp0}
has no positive lower semicontinuous viscosity solution if and only if
$$
g_\omega \left( \frac{2}{1-p}\right) \leq \theta_0\, ,
$$
with $\omega=\frac{\Lambda}{\lambda}\geq 1$ and $g_\omega$ as in {\rm \refe{gom}}.
\end{theorem}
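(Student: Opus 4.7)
The plan is to establish the two implications separately. The ``only if'' (non-existence) direction will follow from the result of \cite{AS} once I translate its hypothesis \refe{condp}, i.e.\ $1-\tfrac{2}{\alpha^+}\leq p\leq 1-\tfrac{2}{\alpha^-}$, into the form $g_\omega(\tfrac{2}{1-p})\leq\theta_0$. Recall from Theorem \ref{homS} that $g_\omega(\alpha^\pm)=\theta_0$, with $\alpha^+\geq1-\tfrac{1}{\omega}$ and $\alpha^-\leq1-\omega$, and that $g_\omega$ is strictly decreasing on $(1-\tfrac{1}{\omega},+\infty)$ and strictly increasing on $(-\infty,1-\omega)$. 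Therefore the inequality $p\geq1-\tfrac{2}{\alpha^+}$ with $p<1$ rewrites as $\tfrac{2}{1-p}\geq\alpha^+$, i.e.\ $g_\omega(\tfrac{2}{1-p})\leq\theta_0$ by monotonicity; similarly $p\leq1-\tfrac{2}{\alpha^-}$ with $p>1$ translates to the same condition. Finally $p=1$ corresponds to $\tfrac{2}{1-p}=\pm\infty$, in which case $g_\omega$ tends to $0\leq\theta_0$, placing this case also in the non-existence regime.

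For the converse, assume $g_\omega(\tilde\gamma)>\theta_0$ with $\tilde\gamma:=\tfrac{2}{1-p}$. The plan is to construct an explicit smooth positive classical super-solution of \refe{Pp0} on $\mathcal{C}_0$. Consider first $\tilde\gamma>0$ (the case $\tilde\gamma<0$ is symmetric, using formula \refe{fi-} in place of \refe{fi+}). By monotonicity of $g_\omega$ I pick $\gamma\in(\tilde\gamma,\alpha^+)$ so that $g_\omega(\gamma)>\theta_0$ still, set $q:=\tilde\gamma/\gamma\in(0,1)$, and let $\tilde u(\rho,\theta):=\rho^\gamma\phi_\gamma(\theta)$, where $\phi_\gamma$ is given by formula \refe{fi+} with $\gamma$ in place of $\alpha^+$. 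By Theorem \ref{homS}, $\tilde u$ satisfies $\Mm(D^2\tilde u)=0$ on the cone of half-opening $g_\omega(\gamma)$, which strictly contains $\mathcal{C}_0$; hence $\phi_\gamma$ is strictly positive and bounded away from zero on the compact sub-interval $[-\theta_0,\theta_0]$. I then define $u:=A\tilde u^q$ for a constant $A>0$ to be chosen.

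Since $s\mapsto s^q$ is concave on $(0,+\infty)$ for $q\in(0,1)$, the Hessian splits as
\[
D^2u=Aq(q-1)\,\tilde u^{q-2}\,\nabla\tilde u\otimes\nabla\tilde u+Aq\,\tilde u^{q-1}\,D^2\tilde u,
\]
whose first summand is rank-one and negative semidefinite. Applying the standard ellipticity estimate $\Mm(M+N)\leq\Mp(M)+\Mm(N)$ together with the positive $1$-homogeneity of $\Mm$ and the identity $\Mm(D^2\tilde u)=0$, I expect the strict bound
\[
\Mm(D^2u)\;\leq\;-\lambda\,Aq(1-q)\,\tilde u^{q-2}|\nabla\tilde u|^2\;<\;0.
\]
Both this right-hand side and $u^p=A^p\tilde u^{qp}$ have $\rho$-scaling $\rho^{\tilde\gamma-2}$, precisely because the defining identity $\tilde\gamma(1-p)=2$ forces $q\gamma p+2=q\gamma$. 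Cancelling the common factor $\rho^{\tilde\gamma-2}$, the desired inequality $\Mm(D^2u)+u^p\leq 0$ reduces to a pointwise inequality on $[-\theta_0,\theta_0]$ whose right-hand side is a strictly positive continuous function of $\theta$; choosing $A$ large enough (since $p<1$, the map $A\mapsto A^{p-1}$ is decreasing) closes the argument.

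The principal obstacle I anticipate is the bookkeeping for the rank-one correction term: one must verify that $\Mp$ of a rank-one negative-semidefinite matrix equals $\lambda$ times its single non-trivial eigenvalue, and then track the resulting scaling carefully through the reduction. Once this is done, the exact cancellation of $\rho^{\tilde\gamma-2}$ and the uniform positivity of $\phi_\gamma$ on $[-\theta_0,\theta_0]$ combine to give a pointwise inequality satisfiable by a suitable choice of the single constant $A$, and the construction is sharp precisely at the critical exponent $\tilde\gamma=2/(1-p)$.
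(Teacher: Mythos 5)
Your proposal is correct, and while the non-existence half is, as you say, just a matter of translating the criterion of \cite{AS} through the characterization $g_\omega(\alpha^\pm)=\theta_0$ (which is also what the paper does), your construction of supersolutions for the existence half is genuinely different from, and notably cleaner than, the one in the paper. The paper takes $u(\rho,\theta)=\rho^\beta\bigl(\phi_\alpha(\theta)-\gamma\bigr)$ with $\alpha,\beta$ on the same side of the homogeneity exponent but distinct, subtracts a small constant $\gamma>0$ from the angular profile, estimates the eigenvalues of $D^2u$ by hand (a rather heavy computation with ${\mathcal R}$ and ${\mathcal D}$), obtains the supersolution inequality only for $\rho\geq1$ because the two $\rho$-powers $\rho^{\beta-2}$ and $\rho^{\beta p}$ do not match exactly, and then translates the cone to restore the inequality on all of $\mathcal{C}_0$. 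Your construction $u=A\tilde u^{q}$, with $\tilde u=\rho^{\gamma}\phi_\gamma$ an exact solution on a slightly larger cone and $q=\tilde\gamma/\gamma\in(0,1)$, avoids all the eigenvalue bookkeeping: the rank-one correction from the concave power, together with $\Mm(M+N)\leq\Mm(N)+\Mp(M)$, positive $1$-homogeneity, and the fact that $\Mp$ of a rank-one negative-semidefinite matrix is $\lambda$ times its eigenvalue, gives $\Mm(D^2u)\leq-\lambda Aq(1-q)\tilde u^{q-2}|\nabla\tilde u|^2$ directly; the exact scaling $\gamma q=\tilde\gamma$ matches the $\rho$-powers on the nose, so no translation is needed; and $\phi_\gamma$ bounded away from zero on $[-\theta_0,\theta_0]$ leaves only a one-parameter tuning of $A$. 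Two small points you should make explicit: for $p>1$ (so $\tilde\gamma<0$) one chooses $\gamma\in(\alpha^-,\tilde\gamma)$, which still yields $q\in(0,1)$ because $\tilde\gamma/\gamma=|\tilde\gamma|/|\gamma|$, and the normalization must now be $A$ \emph{small}, since $A^{p-1}\to0$ as $A\to0^+$ when $p>1$; and you should note that $\tilde\gamma=2/(1-p)$ lies in the domain of $g_\omega$ precisely when it lies on the correct side of $\alpha^+$ or $\alpha^-$, so the monotonicity argument for the translation of \refe{condp} is legitimate.
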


\begin{proof} By the discussion above,  we need only to check the existence of supersolutions of \refe{Pp0} both for $p>1-\frac{2}{\alpha^-}$ and for $p<1-\frac{2}{\alpha^+}$.

For $p>1-\frac{2}{\alpha^-}>1$, we can select  $\alpha$ and $\beta$ satisfying $\alpha^-< \alpha <\beta< \min \{1-\omega\, , -\frac{2}{p-1}\}<0$, and consider the function
\begin{equation}\label{u}
u(\rho, \theta)=\rho^\beta \left( \phi_\alpha (\theta)-\gamma\right)\, ,
\end{equation}
where $\phi_\alpha$ is defined as in \refe{fi-} with $\alpha^-$ replaced by $\alpha$, and $\gamma$ is such that $0<\gamma< \phi_\alpha(\theta_0)$.  Note that $\phi_\alpha(\theta)$ is strictly positive for $|\theta|< g_\omega(\alpha)$ and, by monotonicity, $g_\omega(\alpha)> g_\omega(\alpha^-)=\theta_0$.

By positive homogeneity and superadditivity of operator $\Mm$, we have
$$
-\Mm (D^2u)\geq -\Mm \left( D^2(\rho^\beta \phi_\alpha(\theta))\right) +\gamma \, \Mm(D^2\, \rho^\beta)\, ,
$$
so that, by a simple computation, we obtain
\begin{equation}\label{est1}
-\Mm(D^2u)\geq -\Mm \left( D^2(\rho^\beta \phi_\alpha(\theta))\right) +\lambda \, \gamma\, \beta (\beta-1+\omega) \rho^{\beta-2}\, .
\end{equation}
Moreover, according to Lemma \ref{hessian} and to the definition of $\phi_\alpha$,  the eigenvalues of $D^2(\rho^\beta \phi_\alpha(\theta))$ are given by
$$
\lambda_{1,2} = \dis \frac{\rho^{\beta-2}}{2} {\mathcal R} \left[ \beta^2  -\frac{\alpha}{\omega} (\alpha-1+\omega) -\left( \beta^2 -\omega \alpha \left(\alpha-1+\frac{1}{\omega}\right) \right)  \left(G^{-1}_{\omega,\alpha}(\theta)\right)^2 \pm \sqrt{{\mathcal D}}\right]\, ,
$$
with
$$
{\mathcal R}=\frac{ \left( 1+\frac{\omega (\alpha -1+1/\omega)}{\alpha-1+\omega} \left(G^{-1}_{\omega,\alpha}(\theta)\right)^2\right)^{(\alpha-2)/2}}{\left( 1+\omega  \left(G^{-1}_{\omega,\alpha}(\theta)\right)^2\right)^{\alpha/2}}
$$
and
$$
\begin{array}{ll}
{\mathcal D} = &
\left[  (\beta-1)^2  +(\alpha-1)\, \left(1 +\frac{\alpha}{\omega}\right) -\left( (\beta-1)^2  +(\alpha-1)\, (1 +\omega \alpha ) \right)  \left(G^{-1}_{\omega,\alpha }(\theta)\right)^2\right]^2 \\[2ex]
& +4 \, (\beta-1)^2  \frac{(\omega+1)^2}{\omega} \alpha^2 \left(G^{-1}_{\omega,\alpha}(\theta)\right)^2\, .
\end{array}
$$
Since $\alpha<\beta<1-\omega<0$, by choosing $\beta$ sufficiently close to $\alpha$ it is not difficult to verify that
$$
\begin{array}{rl}
\dis {\mathcal D}   \geq &
\dis \left[  (\beta-1)^2  +(\alpha-1)\, \left(1 +\frac{\alpha}{\omega}\right) +\left( (\beta-1)^2  +(\alpha-1)\, (1 +\omega \alpha ) \right)  \left(G^{-1}_{\omega,\alpha }(\theta)\right)^2\right]^2\\[2ex]
 = & \dis  \left[ \beta (\beta-2) +\frac{\alpha}{\omega} (\alpha -1+\omega) + \left( \beta(\beta-2) +\omega \alpha \left( \alpha-1+\frac{1}{\omega}\right) \right) \left(G^{-1}_{\omega,\alpha }(\theta)\right)^2\right]^2\, .
 \end{array}
$$
Hence, one has $\lambda_1\geq 0$, $\lambda_2\leq 0$ and
$$
\begin{array}{l}
\dis -\Mm \left(D^2(\rho^\beta \phi_\alpha(\theta)\right) \geq  \lambda\, 
\rho^{\beta-2} {\mathcal R}\\[3ex]
 \dis \ \  \times \left[ \alpha (\alpha -1+\omega) -\beta (\beta-1+\omega) -\omega \left( \alpha \left( \alpha-1+\frac{1}{\omega}\right) -\beta
\left(\beta -1+\frac{1}{\omega}\right) \right)  \left(G^{-1}_{\omega,\alpha}(\theta)\right)^2\right] 
\end{array}
$$
We further observe that
$$
{\mathcal R}_0 \leq {\mathcal R}\leq 1
$$
for a positive constant ${\mathcal R}_0$ depending only on $\omega$ and $\alpha$. Therefore, we have the estimate
$$
\begin{array}{rl}
\dis -\Mm \left(D^2(\rho^\beta \phi_\alpha(\theta)\right) \geq & \dis \lambda\, 
\rho^{\beta-2} {\mathcal R}_0 \left[ \alpha (\alpha -1+\omega) -\beta (\beta-1+\omega) \right]\\[3ex]
 & \dis  -\omega \, \lambda\, 
\rho^{\beta-2} \left[ \alpha \left( \alpha-1+\frac{1}{\omega}\right) -\beta
\left(\beta -1+\frac{1}{\omega}\right) \right]  \left(G^{-1}_{\omega,\alpha}(\theta)\right)^2
\end{array}
$$
which, plugged into \refe{est1}, yields
$$
\begin{array}{rl}
\dis -\Mm \left(D^2u\right) \geq & \dis \lambda\, 
\rho^{\beta-2} \left\{  \phantom{\frac{1}{2}}\!\!\! {\mathcal R}_0\left[  \alpha (\alpha -1+\omega) -\beta (\beta-1+\omega) \right] +\gamma\, \beta\, (\beta-1+\omega) \right. \\[3ex]
 & \dis  \qquad \quad \left. -\omega  \, \left[ \alpha \left( \alpha-1+\frac{1}{\omega}\right) -\beta
\left(\beta -1+\frac{1}{\omega}\right) \right]  \left(G^{-1}_{\omega,\alpha}(\theta)\right)^2\right\} 
\end{array}
$$
Next,  we choose the constant $\gamma$ satisfying
$$
\gamma > \frac{ \omega \left[ \alpha \left( \alpha-1+\frac{1}{\omega}\right) -\beta  
\left(\beta -1+\frac{1}{\omega}\right) \right]-{\mathcal R}_0 \left[   \alpha (\alpha -1+\omega) -\beta (\beta-1+\omega)\right]}{\beta (\beta-1+\omega)}\, .
$$
Note that the right hand side of the above inequality tends to zero as $\beta\to \alpha$, so that this condition is compatible with the initial requirement $\gamma< \phi_\alpha(\theta_0)$ for $\beta$ sufficiently close to $\alpha$.

With these choices of $\alpha ,\beta$ and $\gamma$, it then follows, for $|\theta|<\theta_0$,
$$
\begin{array}{rl}
\dis -\Mm \left(D^2u\right) \geq & \dis \lambda\, 
 \omega\, \left[  \alpha \left(\alpha -1+\frac{1}{\omega}\right) -\beta \left(\beta-1+\frac{1}{\omega}\right) \right] 
 \left( 1- \left(G^{-1}_{\omega,\alpha}(\theta)\right)^2\right) \rho^{\beta-2} \\[3ex]
 \geq & \dis 
 \lambda\, 
 \omega\, \left[  \alpha \left(\alpha -1+\frac{1}{\omega}\right) -\beta \left(\beta-1+\frac{1}{\omega}\right) \right] 
 \left( 1- \left(G^{-1}_{\omega,\alpha}(\theta_0)\right)^2\right) \rho^{\beta-2}
 \end{array}
$$
and, since $\beta -2\geq \beta\, p$ and $\phi_\alpha-\gamma<\phi_\alpha\leq 1$, this implies, for $\rho\geq 1$,
$$
-\Mm (D^2 u)\geq \delta\, \rho^{\beta p}\geq \delta\, u^p\, ,
$$
where we have set
$$
\delta= \lambda\, 
 \omega\, \left[  \alpha \left(\alpha -1+\frac{1}{\omega}\right) -\beta \left(\beta-1+\frac{1}{\omega}\right) \right] 
 \left( 1- \left(G^{-1}_{\omega,\alpha}(\theta_0)\right)^2\right)\, .
 $$
 Hence, the function $\tilde{u} (x,y)=\delta^{1/(p-1)} u(x,y+1/\sin (\theta_0))$ is a classical solution of \refe{Pp0}.
 \smallskip
 
 If $p<1-\frac{2}{\alpha^+}<1$, we can apply an analogous argument as above, and even simpler in the case $\alpha^+\leq 1$, i.e. $\theta_0\geq \frac{\pi}{2}$. By considering again the function $u$ defined as in \refe{u}, but with parameters satisfying $\max\left\{ 1-\frac{1}{\omega}, \frac{2}{1-p}\right\} <\beta <\alpha <\alpha^+$ and $0<\gamma<\phi_{\alpha}(\theta_0)$, and by using again Lemma \ref{hessian} and the expressions \refe{fi+} for $\phi_\alpha$, one verifies that, for $|\theta|\leq \theta_0$, $\beta<\alpha$ sufficiently close to $\alpha$ and $\gamma$ suitably chosen, $u$ satisfies in the classical sense
 $$
 -\Mm (D^2u)\geq \delta\, \rho^{\beta-2}\, ,
 $$
 with $\delta>0$ defined exactly as before. Observing further that $\beta-2> p \beta$ and $\phi_\alpha(\theta_0)-\gamma \leq \phi_\alpha(\theta)-\gamma<1$ for $|\theta|\leq \theta_0$, we obtain for $\rho\geq 1$ and $|\theta|\leq \theta_0$
 $$
 -\Mm(D^2u)\geq \hat{\delta} \, u^p\, ,
 $$
 with
 $$
 \hat{\delta}=\left\{
 \begin{array}{ll}
 \delta & \hbox{ if } p\geq 0\\[2ex]
 \delta \left( \phi_\alpha(\theta_0)-\gamma\right)^{-p}  & \hbox{ if } p< 0
 \end{array}\right.
 $$
 Thus,   $\tilde{u} (x,y)=\hat{\delta}^{1/(p-1)} u(x,y+1/\sin (\theta_0))$  is a classical solution of \refe{Pp0}.
 
\end{proof}
\smallskip

{\bf Acknowledgments. } The author is partially supported by Gruppo Nazionale per l'Analisi Matematica e la Probabilit\`a (GNAMPA) of the Istituto Nazionale di Alta Matematica (INdAM).

\end{document}